\newtheorem{thm}{Theorem}[section]
\newtheorem{lem}{Lemma}[section]
\newtheorem{conj}{Conjecture}[section]
\newtheorem{exa}{Example}[section]
\newtheorem{cor}{Corollary}[section]
\newtheorem{dfn}{Definition}[section]
\newcommand{\N}{\mathbb{N}}
\newcommand{\Z}{\mathbb{Z}}
\newcommand{\Q}{\mathbb{Q}}
\newcommand{\R}{\mathbb{R}}
\newcommand{\C}{\mathbb{C}}
\newcommand{\tP}{\mathbb{P}}
\title{Approximate Atkin-Serre Conjecture}
\date{}
\author{N. A. Carella}
\begin{document}
\thispagestyle{empty}
\date{}

\maketitle
\textbf{\textit{Abstract}:} Let $\lambda(n)$ be the $n$th coefficient of a modular form $f(z)=\sum_{n\geq 1} \lambda(n)q^n$ of 
weight $k\geq 4$, let $p^m$ be a prime power, and let $\varepsilon>0$ be a small number. A pair of completely different 
approximations of the Atkin-Serre conjecture are presented in this note. The first approximation of the lower bound
$p^{(k-1)m/2-2k+2\varepsilon}\leq   \left | \lambda(p^m)\right |$ is true for all sufficiently large prime powers, and the second 
approximation of the lower bound $n^{(k-3)/2+\log \log  \log n/\log \log n}\leq   \left | \lambda(n)\right |$ is true on a subset of 
integers of density $1$.

\let\thefootnote\relax\footnote{ \today \date{} \\
\textit{AMS MSC}: Primary 11F35; Secondary 11D45 \\
\textit{Keywords}: Tau Function; Modular Function; Atkin-Serre Conjecture.}

{\let\bfseries\mdseries
\tableofcontents
}
\section{Introduction}\label{S5599}
The properties of the \textit{Fourier coefficients} $\lambda: \N\longrightarrow \C$ of modular forms
\begin{equation} \label{eq5599.035}
f(s)=\sum_{n \geq1} \lambda(n)q^n=\lambda(1)q+\lambda(2)q^2+\lambda(3)q^3+\lambda(4)q^4+\cdots,
\end{equation}
where $s\in \mathbb{C}$ is a complex number in the upper half plane, and $q=e^{i2\pi s }$, are the topics of many studies. Basic information on modular forms, classified by various parameters such as \textit{level} $N\geq1$, \textit{weight} $k\geq1$, et cetera, and other data are archived in \cite{LMFDB}. The corresponding $L$-function $L(s,f)=\sum_{n \geq 1}\lambda(n)n^{-s}$ is analytic on the complex half plane $\mathcal{H}_f=\{s\in \C: \Re e(s)>(k+1)/2\}$, and its functional equation 
\begin{equation} \label{eq5599.040}
\xi(s)=\left ( 2 \pi\right )^{-s}\Gamma(s)L(s,f), \qquad \xi(s)=\xi(k-s)
\end{equation}
facilitates an analytic continuation to the entire complex plane, see \cite[p.\ 376]{HI1989}. One of the basic property of the Fourier coefficients is the \textit{dynamic range} of its magnitude

\begin{equation} \label{eq5599.045}
 -L \leq \lambda\left (n\right )\leq U,
\end{equation}
where $L> 0$ and $U> 0$ are the lower bounds and upper bounds respectively. The earliest results for the upper bounds seems to be the Hecke estimate
\begin{equation} \label{eq5599.055}
\left |\lambda\left (p^m\right )\right | \leq cp^{(k-1)m},
\end{equation} 
where $p^m$ is a prime power, $c>0$ is a constant, and $k\geq 1$ is the weight, see \cite[Theorem 4]{ZD2013}, \cite[Proposition 5.4]{CH2019}, et cetera. After many partial results by many authors, this line of research culminated with the effective upper bound (known as Deligne theorem)
\begin{equation} \label{eq5599.100}
\left |\lambda\left (n\right )\right | \leq d(n)n^{(k-1)/2+\varepsilon},
\end{equation} 
where $d(n)$ is the divisors function, and $\varepsilon>0$ is a small number. Furthermore, the partial upper bound
\begin{equation} \label{eq5599.110}
\left |\lambda\left (n\right )\right | \leq 2n^{(k-1)/2}\left (\log n\right )^{-1/2+o(1)},
\end{equation}
on a subset of integers of density $1$, was proved in \cite[Theorem 1.1]{LS2019}. \\

On the other hand, the lower bounds for nonvanishing Fourier coefficients have no effective results. However, there is a claims that specifies an effective lower bound.

\begin{conj} {\normalfont (Atkin-Serre Conjecture)} Let $f$ be a non-CM modular form of weight $k\geq4$. If $p$ is sufficiently large prime, then, for each $\varepsilon> 0$, there exist constants $c(\varepsilon,f)>0$, such that
\begin{equation} \label{eq5599.115}
\left |\lambda\left (p\right )\right | \geq c(\varepsilon,f)p^{(k-3)/2-\varepsilon}.
\end{equation}  
\end{conj}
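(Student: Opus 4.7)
The plan is to combine Deligne's effective upper bound with large-image information on the $\ell$-adic Galois representations attached to $f$, turning the Atkin--Serre inequality into a non-concentration statement for Frobenius traces. Writing $\lambda(p)=\alpha_p+\overline{\alpha_p}$ with $|\alpha_p|=p^{(k-1)/2}$, Deligne's theorem yields an angle $\theta_p\in[0,\pi]$ with $\lambda(p)=2p^{(k-1)/2}\cos\theta_p$, so the target lower bound is equivalent to the angular gap $\left|\theta_p-\pi/2\right|\gg_\varepsilon p^{-1-\varepsilon}$. The strategy is to prove that anomalously small values of $|\lambda(p)|$ would force the Frobenius $\mathrm{Frob}_p$ into a very thin subvariety of a large Galois image whose density can be controlled by effective Chebotarev.

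The first step is to fix a small auxiliary prime $\ell$ and consider the $\ell$-adic representation $\rho_{f,\ell}\colon\Gal(\overline{\Q}/\Q)\to GL_2(\Z_\ell)$ attached to $f$, whose trace at an unramified Frobenius equals $\lambda(p)$. For a non-CM newform, the theorems of Ribet and Momose produce an open image, and Serre's work provides effective control of its conductor and ramification. A hypothetical violation $|\lambda(p)|<p^{(k-3)/2-\varepsilon}$ forces the algebraic integer $\lambda(p)$ to be divisible by every prime power $\ell^N\ll p^{(k-3)/2-\varepsilon}$ in the Hecke field $K_f$, placing $\mathrm{Frob}_p$ in the trace-zero subvariety $T_{\ell^N}\subset GL_2(\Z/\ell^N\Z)$, whose Haar density is $O(\ell^{-N})$. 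The second step is to apply effective Chebotarev to the fixed field $L_{\ell^N}$ of $\ker(\rho_{f,\ell}\bmod\ell^N)$ and, combined with the log-free zero-density estimates now available for the symmetric-power $L$-functions of $f$ (automorphic by Newton--Thorne), upgrade this density statement into a pointwise non-existence claim for each sufficiently large $p$.

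The main obstacle is the dependence of the Chebotarev error term on the degree $[L_{\ell^N}:\Q]\asymp\ell^{3N}$, which becomes a small positive power of $p$ once $\ell^N\asymp p^{(k-3)/2-\varepsilon}$. Unconditional effective Chebotarev (Lagarias--Odlyzko--Serre) produces an error term that collapses as soon as the degree exceeds any fixed power of $\log X$, so the argument as outlined requires either GRH for the symmetric-power $L$-functions of $f$ or a genuine degree-uniform log-free zero-density estimate. I expect the proof to stall at precisely this point unconditionally; a possible circumvention is to trade the global Chebotarev input for a local obstruction at $p$ itself, using the $p$-adic Hodge-theoretic shape of $\rho_{f,\ell}$ restricted to a decomposition group at $p$ to rule out near-trace-zero Frobenii directly, but whether such local information can substitute for the missing zero-density estimate is the central open question.
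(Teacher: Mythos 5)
There is nothing in the paper to compare your proposal against: the statement you were asked to prove is labeled as a conjecture, and the paper does not prove it. The paper only establishes two strictly weaker approximations --- Theorem \ref{thm5599.200}, an explicit Liouville-type Diophantine bound $|\lambda(p^n)|\geq \frac{1}{8}p^{(k-3)n/2-2k+2-\varepsilon}$ at prime powers (note the exponent loses $2k-2$ relative to the conjecture), and Theorem \ref{thm9911.200}, a lower bound valid only on a set of integers of density $1$, obtained from the effective Sato--Tate results of Gafni--Thorner--Wong. So any complete proof you produced would be claiming something beyond the paper, and your attempt must be judged on its own; it does not survive that scrutiny.

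The central step of your argument is mathematically backwards. You assert that a violation $0<|\lambda(p)|<p^{(k-3)/2-\varepsilon}$ forces $\lambda(p)$ to be divisible by every prime power $\ell^N\ll p^{(k-3)/2-\varepsilon}$, placing $\mathrm{Frob}_p$ in the trace-zero subvariety modulo $\ell^N$. The implication runs the other way: if $\lambda(p)\neq 0$ and $\ell^N\mid\lambda(p)$ in the ring of integers of the Hecke field $K_f$, then the norm gives $|\mathrm{Nm}_{K_f/\Q}(\lambda(p))|\geq \ell^N$, so together with Deligne's bound on the conjugates, \emph{smallness of} $|\lambda(p)|$ \emph{precludes} divisibility by large $\ell^N$ rather than forcing it. What small $|\lambda(p)|$ actually gives is that $\lambda(p)$ lies in a short interval of integers, i.e.\ its residue mod $\ell^N$ lies in a set of proportion roughly $p^{-\varepsilon}$ --- a statement suitable for \emph{counting} exceptional primes (which is exactly how density-$1$ results like Theorem \ref{thm9911.200} and the bounds in the Gafni--Thorner--Wong and Hoey et al.\ papers are obtained), but useless for excluding an individual prime. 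This points to the second, independent flaw: effective Chebotarev, with or without GRH, with or without log-free zero-density estimates for symmetric-power $L$-functions, is an equidistribution statement; it can never certify that no Frobenius at all lands in a nonempty union of conjugacy classes, so the ``pointwise non-existence claim for each sufficiently large $p$'' in your second step cannot follow from this machinery even conditionally. Your closing admission that the argument stalls is therefore accurate but understated --- the obstruction is not merely the degree-uniformity of the Chebotarev error term, but the reversed divisibility inference and the category mismatch between density statements and pointwise lower bounds. Repairing the latter is essentially the open content of the Atkin--Serre conjecture itself.
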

Various new partial results on this conjecture appear in \cite{RJ2005}, \cite{GT2020}, et alii. Furthermore, the effective lower bound
\begin{equation} \label{eq5599.140}
2p^{(k-1)n/2}\frac{\log \log p^n}{(\log p^n )^{1/2} }< \left|\lambda\left (p^n\right )\right |,
\end{equation}
on a subset of primes of density $1$, was proved in \cite[Theorem 1]{GT2020}. A much weaker, almost trivial, but unconditional lower bound proved in \cite[Theorem 1]{MS1987}, has the form  
\begin{equation} \label{eq5599.120}
\left |\lambda\left (n\right )\right | \geq (\log n)^c,
\end{equation} 
where $n$ is an integer for which $\lambda(n)$ is odd, and $c>0$ is an effectively computable absolute constant. The authors commented, [op. cit., page 393], that an application of Roth theorem for the approximations of algebraic integers, see \eqref{eq2020.090}, to the Ramanujan tau function $\tau(p^n)$ yields
\begin{equation} \label{eq5599.030}
\left |\tau\left (p^n\right )\right | \gg p^{11(n-4)/2-\varepsilon},
\end{equation}  
where the implied constant depends on the prime power $p^n\geq1$, and $\varepsilon >0$. But the implied constant is not computable. \\

The proof of the first approximation in Section \ref{S9955}, applies an explicit version of Liouville theorem for the approximations of algebraic integers to obtain the following result.
\begin{thm} \label{thm5599.200} Let $\lambda(m)\ne0 $ be the $m$th coefficient of a modular form $f(z)=\sum_{m\geq 1} \lambda(m)q^m$ of 
weight $k\geq 4$. If the integer $m=p^n$ is a prime power, and $\varepsilon>0$, then
\begin{equation} \label{eq5599.200}
\left |\lambda\left (p^n\right )\right | \geq \frac{1}{8}p^{(k-3)n/2-2k+2-\varepsilon},
\end{equation} 
as $p^{n/2}\to \infty.$
\end{thm}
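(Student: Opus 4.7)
The plan is to combine the Satake parameterization of $\lambda(p^{n})$ with an explicit Liouville-type lower bound for the nonzero algebraic integers that arise, and then to divide by the Deligne upper bound on $|\alpha-\beta|$. First I would factor $\lambda(p^{n}) = (\alpha^{n+1}-\beta^{n+1})/(\alpha-\beta)$, where $\alpha,\beta$ are the Satake parameters at $p$, i.e.\ the two roots of $X^{2}-\lambda(p)X+p^{k-1}=0$. Deligne's bound \eqref{eq5599.100} gives $|\alpha|=|\beta|=p^{(k-1)/2}$, whence $|\alpha-\beta|\le 2p^{(k-1)/2}$, and the same estimate applies at every Galois-conjugate eigenform $f^{\sigma}$, so that each conjugate Satake parameter has modulus $p^{(k-1)/2}$ as well. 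Since $\lambda(p^{n})\ne 0$, the identity
\[
(\alpha^{n+1}-\beta^{n+1})^{2} \;=\; \lambda(p^{n})^{2}\bigl(\lambda(p)^{2}-4p^{k-1}\bigr) \;=\; T_{n+1}^{2}-4p^{(k-1)(n+1)},
\]
with $T_{n+1}=\alpha^{n+1}+\beta^{n+1}\in K_{f}$, exhibits a nonzero algebraic integer in the Hecke field $K_{f}$ of degree $d=[K_{f}:\Q]$, all of whose Galois conjugates are bounded by $8p^{(k-1)(n+1)}$.

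Next I would apply the explicit Liouville inequality \eqref{eq2020.090} to this integer, or equivalently to the Satake unit $\gamma=\alpha/\beta=\alpha^{2}/p^{k-1}$, which is algebraic of degree at most $2d$ over $\Q$, has every Galois conjugate on the unit circle, and carries a ``denominator'' of $p^{k-1}$ (since $p^{k-1}\gamma=\alpha^{2}$ is integral). Writing $\lambda(p^{n})=\beta^{n}(\gamma^{n+1}-1)/(\gamma-1)$ with $|\beta|^{n}=p^{(k-1)n/2}$ and $|\gamma-1|\le 2$, the target reduces to a lower bound of the shape $|\gamma^{n+1}-1|\gg p^{-n-2(k-1)-\varepsilon}$. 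The linear-in-$n$ loss $p^{-n}$ is designed to arise from the degree-$2$ feature of $\gamma$ over $K_{f}$ in Liouville, while the additive correction $-2(k-1)$ reflects the $p^{k-1}$ denominator of $\gamma$ together with the Deligne factor $2p^{(k-1)/2}$ in the denominator $|\alpha-\beta|$. Absorbing the degree-dependent constants into the $\varepsilon$-slack then delivers the stated inequality $|\lambda(p^{n})|\ge \tfrac{1}{8}p^{(k-3)n/2-2k+2-\varepsilon}$.

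The principal obstacle is that a naive ``nonzero algebraic integer $\ge 1$'' application of Liouville to $4p^{(k-1)(n+1)}-T_{n+1}^{2}$ only produces $|\lambda(p^{n})|\ge 1/(2p^{(k-1)/2})$, which is independent of $n$ and therefore far weaker than the target bound for every large $n$. The delicate step is to apply the explicit Liouville inequality in a form that exploits the unit-circle structure of $\gamma$ and keeps the $p$-cost of approximating $\gamma^{n+1}$ by $1$ linear rather than exponential in $n$; matching the additive correction $-2k+2$ that comes from the non-integrality of $\gamma$ and from the Deligne factor in $|\alpha-\beta|$ is the technical heart of the argument, and it is exactly at this point that the loss in strength relative to the Roth-type bound \eqref{eq5599.030} — namely $(k-3)n/2$ instead of $(k-1)n/2$ — enters, reflecting the passage from the Roth exponent $2+\varepsilon$ to the Liouville exponent $2$.
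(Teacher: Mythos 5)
Your overall route is the same as the paper's: write $\lambda(p^n)=(\alpha^{n+1}-\overline{\alpha}^{\,n+1})/(\alpha-\overline{\alpha})$, pull out the factor $\overline{\alpha}^{\,n+1}(\gamma^{n+1}-1)$ with $\gamma=\alpha/\overline{\alpha}$, and reduce the theorem to a lower bound of the shape $|\gamma^{n+1}-1|\gg p^{-n-2k+2}$ supplied by an explicit Liouville inequality. The paper does exactly this via its Corollary \ref{cor2020.300}. However, your proposal stops at the decisive step: you state that the bound $|\gamma^{n+1}-1|\gg p^{-n-2(k-1)-\varepsilon}$ ``is designed to arise from the degree-$2$ feature of $\gamma$'' and you yourself flag it as ``the technical heart of the argument,'' but you never derive it. That is a genuine gap, not a routine omission, because the estimate does not follow from the explicit Liouville theorem as you have set it up. The number $\gamma^{n+1}=\alpha^{2(n+1)}/p^{(k-1)(n+1)}$ is indeed of degree $2$ over the Hecke field, but its Mahler height grows like $p^{(k-1)(n+1)}$ (the leading coefficient of its minimal polynomial is essentially $p^{(k-1)(n+1)}$), so Liouville applied to $\gamma^{n+1}$ and the rational number $1=1/1$ gives only
\begin{equation*}
\left|\gamma^{n+1}-1\right| \;\gg\; H\!\left(\gamma^{n+1}\right)^{-1} \;\gg\; p^{-(k-1)(n+1)},
\end{equation*}
which after multiplying by $p^{(k-1)n/2}$ yields a bound on $|\lambda(p^n)|$ that is less than $1$ and hence trivial. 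Your own ``principal obstacle'' paragraph concedes this. To make the exponent linear in $n$ rather than proportional to $(k-1)n$ one needs a genuinely different tool (a Baker-type linear form in logarithms, or some structural input you have not identified), and no amount of exploiting the unit-circle location of $\gamma$ within plain Liouville will produce it.

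You should also be aware that the paper's own bridge over this gap, Corollary \ref{cor2020.300}, rests on the assertion that $H(\gamma^{n+1})\leq 4p^{2(k-1)}$ uniformly in $n$, together with an auxiliary sequence of convergents $p_m/q_m$ of height at most $p^{n/2}$ whose role in bounding $|\gamma^{n+1}-1|$ from below is not justified (the reverse triangle inequality step $\bigl|\,|\beta-p_m/q_m|-|1-p_m/q_m|\,\bigr|\geq |\beta-p_m/q_m|$ does not hold merely because $|1-p_m/q_m|\geq p^{-n/2}$). So the step you left open is precisely the step on which the paper's argument is also weakest; reproducing the paper's Corollary \ref{cor2020.300} verbatim would not close your gap. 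As written, your submission is a correct reduction plus an honest description of the missing lemma, but it is not a proof of the theorem.
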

The parameters $p^{n/2}> p^{5}$, and $k\geq 4$ produce a nontrivial lower bound. 

\begin{exa}\label{exa5599.500} {\normalfont For $k=4$, consider the trace of the Frobenius $a(n)$ of the modular form  $f(z)=\sum_{n\geq1}a(n)q^n$ attached to a nonsingular elliptic curve. Suppose that $a(p)\ne0$. Then, at the prime power $p^n$, it satisfies the new explicit lower bound
\begin{equation} \label{eq5599.510}
\left |a \left (p^n\right )\right | \geq \frac{1}{8} p^{n/2-6-\varepsilon},
\end{equation}
as $ p^5\leq p^{n/2}\to \infty.$
}
\end{exa}
\begin{exa}\label{exa5599.550} {\normalfont For $k= 12$, consider the Ramanujan tau function $\tau(n)$ associated with the modular form  $f(z)=\sum_{n\geq1}\tau(n)q^n$. Suppose that $\tau(p)\ne0$. Then, at the prime power $p^n$, it satisfies has the new explicit lower bound
\begin{equation} \label{eq5599.550}
\left |\tau\left (p^n\right )\right | \geq \frac{1}{8} p^{9n/2-22-\varepsilon},
\end{equation}
as $ p^5\leq p^{n/2}\to \infty.$
}
\end{exa}

The proof of the second approximation in Section \ref{S9911}, employs completely different method to derive an explicit lower bound for almost all the coefficients. This result has the following claim.
\begin{thm} \label{thm9911.200} The $n$th Fourier coefficient of a modular form  $f(z)=\sum_{n\geq1}\lambda(n)q^n$ of weight $k\geq 4$ 
satisfies the followings lower bound and upper bound
\begin{equation}\label{eq9911.200}
n^{\frac{k-3}{2}+\frac{\log \log \log n}{\log \log n } }\leq   \left | \lambda(n)\right |\leq n^{\frac{k-1}{2}+\varepsilon}\nonumber,
\end{equation}
where $\varepsilon>0$ is a small number, on a subset of integers $\mathcal{N}_f=\{n\in\N: \lambda(n)\ne0\}$ of density $\delta (\mathcal{N}_f)=1$.
\end{thm}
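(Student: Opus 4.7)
The upper inequality follows immediately from Deligne's theorem \eqref{eq5599.100} and the estimate $d(n)=n^{o(1)}$, in fact on every $n\in\N$, so the substantive content of the theorem is the lower inequality. My plan is to exploit the multiplicativity of the Hecke eigenvalues combined with the Deligne parametrization at each prime.

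Concretely, for a Hecke eigenform write $\lambda(p)=2p^{(k-1)/2}\cos\theta_p$ with $\theta_p\in[0,\pi]$. The Hecke recursion $\lambda(p^{a+1})=\lambda(p)\lambda(p^{a})-p^{k-1}\lambda(p^{a-1})$ integrates to the Chebyshev-type identity
\begin{equation*}
\lambda(p^a)=p^{a(k-1)/2}\,\frac{\sin((a+1)\theta_p)}{\sin\theta_p},
\end{equation*}
and multiplicativity yields, whenever $\lambda(n)\ne 0$,
\begin{equation*}
|\lambda(n)|=n^{(k-1)/2}\prod_{p^a\|n}\left|\frac{\sin((a+1)\theta_p)}{\sin\theta_p}\right|.
\end{equation*}
The theorem thus reduces to showing, on a density-$1$ subset of $\mathcal{N}_f$, that $\prod_{p^a\|n}\bigl|\sin((a+1)\theta_p)/\sin\theta_p\bigr|\ge n^{-1+\log\log\log n/\log\log n}$.

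To obtain that bound I would introduce, for a slowly decaying threshold $\eta$, the exceptional set of primes $\mathcal{E}(\eta)=\{p:|\sin\theta_p|<\eta(p)\}$, i.e.\ those whose Satake angle is anomalously close to $0$ or $\pi$ (the Lehmer-type locus where $|\lambda(p)|$ is near its Deligne ceiling and $\lambda(p^a)$ may collapse). A quantitative form of the Sato-Tate equidistribution theorem, in the spirit of the input used to derive \eqref{eq5599.140} in \cite{GT2020}, bounds $\#\{p\le x:p\in\mathcal{E}(\eta)\}$; with the right choice of $\eta$ this forces $\sum_{p\in\mathcal{E}(\eta)}1/p$ to converge. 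A standard Erd\H{o}s--Wintner sieve then shows that the integers with no prime factor in $\mathcal{E}(\eta)$ have natural density $1$. Combined with the Erd\H{o}s--Kac estimate $\omega(n)=(1+o(1))\log\log n$ (valid on density $1$), the product over $p^a\|n$ becomes a product of $(1+o(1))\log\log n$ factors each at least $\eta(p)$, and calibrating $\eta$ to decay just fast enough to preserve the density-$1$ sieve output produces the slow factor $\log\log\log n/\log\log n$ in the exponent.

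The main obstacle is precisely this calibration: the slow factor $\log\log\log n/\log\log n$ is forced by the trade-off between making $\eta$ small (to give a strong lower bound on each factor) and keeping $\sum_{p\in\mathcal{E}(\eta)}1/p$ convergent under currently available effective Sato-Tate inputs. A secondary technical point is the uniform treatment of divisors $p^a\|n$ with $a\ge 2$, where the vanishing locus of $\sin((a+1)\theta_p)$ enlarges with $a$; since almost every $n$ has only $O(1)$ such prime-power divisors (the $a\ge 2$ part of $n$ is bounded for density-$1$ many $n$), these contribute a uniformly bounded multiplicative loss that is absorbed into the $\eta$-calibration. Finally, passing from density $1$ in $\N$ to density $1$ in $\mathcal{N}_f$ is harmless, as $\mathcal{N}_f$ itself has density $1$ by the Serre-Rankin non-vanishing estimate.
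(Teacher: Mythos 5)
Your overall architecture---factor $|\lambda(n)|$ over the prime powers $p^a\mid\mid n$ by multiplicativity, bound each local factor through the Satake angle, remove a bad set of primes via effective Sato--Tate, and then count prime factors---is the same as the paper's. The paper takes the per-prime input ready-made from \cite{GT2020}, namely the density-one lower bound $|\lambda(p)|>p^{(k-1)/2}\log\log p/(\log p)^{1/2}$ defining $\mathcal{P}_f$, proves $\delta(\mathcal{N}_f)=1$ for the generated multiplicative set via Wirsing's formula and Mertens' theorem (Lemma \ref{lem9911.444}), and multiplies the local bounds using the \emph{maximal} order $\omega(n)\leq(1+\varepsilon)\log n/\log\log n$, which is exactly what produces the exponent $-1+\log\log\log n/\log\log n$.

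There are, however, genuine gaps in your version. First and most seriously, your exceptional set is the wrong one. In the identity $\lambda(p^a)=p^{a(k-1)/2}\sin((a+1)\theta_p)/\sin\theta_p$, the locus $|\sin\theta_p|<\eta$ (i.e.\ $\theta_p$ near $0$ or $\pi$) is harmless: there the ratio tends to $\pm(a+1)$, so the factor is near its maximum, not collapsing. The factor collapses when $\theta_p$ is near $j\pi/(a+1)$ with $1\leq j\leq a$; in the generic squarefree case $a=1$ the factor equals $2|\cos\theta_p|=|\lambda(p)|/p^{(k-1)/2}$ and the danger is $\theta_p$ near $\pi/2$, i.e.\ $|\lambda(p)|$ \emph{small}. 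Your sieve leaves precisely this uncontrolled, so the product $\prod_{p\mid n}2|\cos\theta_p|$ has no lower bound on your sieved set and the core of the argument fails; what is needed is the density-one lower bound on $|\lambda(p)|$ that the paper imports from \cite{GT2020}. Second, your density claim is false as stated: if $\mathcal{E}$ is infinite with $\sum_{p\in\mathcal{E}}1/p<\infty$, the integers with no prime factor in $\mathcal{E}$ have density $\prod_{p\in\mathcal{E}}(1-1/p)<1$, not density $1$; convergence of the reciprocal sum buys only positive density, and the passage from a density-one set of primes to a density-one set of integers is exactly the point the paper must (and does, via Lemma \ref{lem9911.444}) argue separately. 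Third, pairing your per-prime threshold with the normal order $\omega(n)\sim\log\log n$ does not naturally produce the exponent $\log\log\log n/\log\log n$; in the paper that exponent arises from combining the factor $\log\log p/(\log p)^{1/2}\geq\log\log n/\log n$ with the maximal order of $\omega(n)$, valid for every large $n$. The same unresolved issue affects your treatment of $a\geq 2$: a bounded squarefull part does not bound $|\lambda(p^a)|$ away from $0$ when $\theta_p$ is near $j\pi/(a+1)$.
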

Consequently, it confirms the Atkin-Serre conjecture for almost every integer. A similar result, in \cite[Theorem 1.6]{HS2021}, proves an asymptotic formula for the number of exceptions.

\begin{exa}\label{exa5599.600} {\normalfont For $k=4$, consider the trace of the Frobenius $a(n)$ of the modular form  $f(z)=\sum_{n\geq1}a(n)q^n$ attached to a nonsingular elliptic curve. Let $r=rad(n)$ be the radical of $n\geq1$, and suppose that $a(r)\ne0$. Then, it satisfies the new explicit lower bound
\begin{equation} \label{eq5599.610}
\left |a \left (n\right )\right | \geq  n^{\frac{1}{2}+\frac{\log \log \log n}{\log \log n } },
\end{equation}
on a subset of integers $\mathcal{N}_f=\{n\in\N: \lambda(n)\ne0\}$ of density $\delta (\mathcal{N}_f)=1$.
}
\end{exa}
\begin{exa}\label{exa5599.650} {\normalfont For $k= 12$, consider the Ramanujan tau function $\tau(n)$ associated with the modular form  $f(z)=\sum_{n\geq1}\tau(n)q^n$. Let $r=rad(n)$ be the radical of $n\geq1$, and suppose that $\tau(r)\ne0$. Then, it satisfies the new explicit lower bound
\begin{equation} \label{eq5599.650}
\left |\tau\left (p^n\right )\right | \geq   n^{\frac{9}{2}+\frac{\log \log \log n}{\log \log n } },
\end{equation}
on a subset of integers $\mathcal{N}_f=\{n\in\N: \lambda(n)\ne0\}$ of density $\delta (\mathcal{N}_f)=1$.
}
\end{exa}

\section{Notation}\label{S9905}

The followings sets of numbers are used within.
\begin{enumerate} \label{eq9901.085}
\item $\tP=\{2,3,5,7, 11,13,\ldots\}$ denotes the set of prime integers,
\item $\N=\{0,1,2,3, \ldots\}$ denotes the set of nonnegative integers,
\item  $\Z=\{\-3,-2,-1,0,1,2,3, \ldots\}$ denotes the set of integers,
 \item $\Q=\{a/b:a,b\in \Z \}$ denotes the set of rational numbers,
 \item $\R$ denotes the set of real numbers,
\item $\C=\{s=a+ib: a,b \in \R\}$ denotes the set of complex numbers.\\
\end{enumerate}

For a pair of real valued functions $f:\R\longrightarrow \C$ and  $g:\R\longrightarrow \C$, the followings symbols are used without explanations.
\begin{enumerate} \label{eq9901.093}
\item $f\ll g$ implies that $af(x)\leq g(x)\leq bf(x)$, where $a>0$ and $b>0$ are constants, for all large real numbers $x$,
\item $f\gg g$ implies that $af(x)\geq g(x)\geq bf(x)$, where $a>0$ and $b>0$ are constants, for all large real numbers $x$,
\item $f=O(g)$ implies that $\vert f(x) \vert\leq \vert g(x)\vert$, for all large real numbers $x$,
\item $f=o(g)$ implies that $\vert f(x)/ g(x)\vert\to 0$ as $x\to \infty$,\\
\end{enumerate}

The followings arithmetic functions are used within.
\begin{enumerate} \label{eq9901.095}
\item $d(n)=\{d\mid n\}$, is the divisors counting function,\\
\item $\omega(n)=\{p\mid n\}$,  is the prime divisors counting function,\\
\item $\pi(x)=\{p\leq x\}$,  is the prime counting function,\\
\item $v_p(n)$, is the $p$-adic valuation, or e maximal prime power divisor of the integer $n$,\\
\item $rad(n)=\prod_{p\mid n}p$, is the radical of the integer $n$,
\end{enumerate}

\section{Lower Bound For All Large Integers}\label{S9955}
This result is based on an application of effective Diophantine approximation for algebraic numbers.
\begin{proof} {\bfseries (Theorem \ref{thm5599.200}) } Let $\alpha_p=p^{(k-1)/2}e^{i\theta_p}$, where $0\leq \theta_p\leq \pi$, be the root of the polynomial 
\begin{equation}\label{eq9955.100}
f(T)=a_2T+a_1T+a_0=T^2-\lambda(p) T+p^{k-1}
\end{equation}
of weight $k\geq4$. Modify the Binet formula (for integers sequences of the second order defined by a quadratic polynomial) into the following form
\begin{eqnarray} \label{eq9955.120}
\lambda(p^n)&=&\frac{\alpha_p^{n+1}-\overline{\alpha_p}^{n+1}}{\alpha_p-\overline{\alpha_p}}\\
&=&\frac{\alpha_p+\overline{\alpha_p}}{\alpha_p+\overline{\alpha_p}} \cdot\frac{\alpha_p^{n+1}-\overline{\alpha_p}^{n+1}}{\alpha_p-\overline{\alpha_p}}\nonumber\\
&=&\left (\alpha_p+\overline{\alpha_p}\right ) \cdot\frac{\alpha_p^{n+1}-\overline{\alpha_p}^{n+1}}{\alpha_p^2-\overline{\alpha_p}^2}\nonumber\\
&=&\left (\alpha_p+\overline{\alpha_p}\right ) \cdot\frac{\overline{\alpha_p}^{n+1}}{\alpha_p^2-\overline{\alpha_p}^2} \left (\alpha_p^{n+1} \overline{\alpha_p}^{-(n+1)}-1\right )\nonumber,
\end{eqnarray}

where $\overline{\alpha}_p$ is the conjugate of $\alpha_p$, and $p^n$ is a prime power. Replace the algebraic integer $\alpha_p=p^{(k-1)/2}e^{i\theta_p}$, where $0< \theta_p< \pi$. Taking absolute value and simplifying yield

\begin{eqnarray} \label{eq9977.082}
\left |\lambda(p^n)\right |&=&\left |p^{(k-1)/2}\cos(\theta_p)\right |
\left |\frac{p^{(k-1)(n+1)/2}e^{i\theta_p}}{2p^{(k-1)}\sin(2\theta_p)}\right |
\left | \alpha_p^{n+1} \overline{\alpha_p}^{-(n+1)}-1 \right | \\
&\geq& p^{(k-1)n/2}\left | \alpha_p^{n+1} \overline{\alpha_p}^{-(n+1)}-1 \right |\nonumber,
\end{eqnarray}
since $\lambda(p)\ne0$ implies $\theta\ne\pi/2$.
Replacing the estimate in Corollary \ref{cor2020.300} for $n\geq 1$, yields
\begin{eqnarray} \label{eq9977.084}
p^{(k-1)n/2}\left | \alpha_p^{n+1} \overline{\alpha_p}^{-(n+1)}-1 \right |
&\geq & p^{(k-1)n/2}\cdot \frac{1}{8p^{n+2k-2)}} \\
&\geq & \frac{1}{8}p^{(k-3)n/2-2k+2-\varepsilon} \nonumber,
\end{eqnarray}
where $\varepsilon>0$.
\end{proof}

\section{Lower Bound For Almost Every Integer}\label{S9911}
This result is based on an application of the current effective version of the Sato-Tate conjecture for modular forms, see \cite{TJ2021} for recent developments. Define the quantities
\begin{equation} \label{eq9911.013}
L_1=p^{\frac{k-1}{2}+\frac{\log \log p}{(\log p )^{1/2}} }\qquad \text{ and }\qquad U_1=p^{\frac{k-1}{2}+\varepsilon }.
\end{equation}
Given a modular form $f(z)=\sum_{n\geq1}\lambda(n)q^n$ of weight $k\geq1$, let
\begin{equation}\label{eq9911.0500}
\mathcal{P}_f=\left\{p\in\tP: \lambda(p)\geq L_1\right\}\subset \tP
\end{equation}
be a subset of primes of density $\delta \left(\mathcal{P}_f \right) \geq 0$, and let 
\begin{equation}\label{eq9911.0510}
\mathcal{N}_f=\{n=\prod_{p \mid n} p^{v_p}\in\N: p\in \mathcal{P}_f\}\subset \N,
\end{equation}
where $v_p(n)\mid \mid n$ is the maximal prime power divisor, be the generated multiplicative subset of integers of density $\delta(\mathcal{N}_f)\geq0$.

\begin{lem} \label{lem9911.444} The multiplicative subset of integers  $\mathcal{N}_f \subset \N$ has density $\delta(\mathcal{N}_f)=1$.

\end{lem}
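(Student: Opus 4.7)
The plan is to combine an effective Sato--Tate equidistribution theorem for the Frobenius angles of $f$ with a standard multiplicative-sieve count. I will first show that the set of ``good'' primes $\mathcal{P}_f$ has relative density $1$ among all primes, and then deduce that the integer set $\mathcal{N}_f$ it generates multiplicatively has natural density $1$ in $\N$.

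For the first step, I write $\lambda(p)=2p^{(k-1)/2}\cos\theta_p$ via the Deligne parametrization, so that the defining inequality $\lambda(p)\geq L_1$ becomes a constraint on $\theta_p$. An effective Sato--Tate theorem for non-CM newforms (as referenced in \cite{TJ2021}) gives a quantitative error bound for the equidistribution of the angles $\theta_p$ with respect to the Sato--Tate measure, and this forces the exceptional set $\mathcal{E}=\tP\setminus\mathcal{P}_f$ to satisfy
\begin{equation*}
|\mathcal{E}\cap[1,x]|=o(\pi(x))
\end{equation*}
with a power-saving error in $x$.

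For the second step, I transfer this prime-level estimate to integers. Since membership in $\mathcal{N}_f$ is determined by the prime factorization of $n$, a classical multiplicative sieve argument, combined with Mertens' theorem and the quantitative prime bound from the first step, shows that the complement of $\mathcal{N}_f$ in $\N$ has natural density zero. Concretely, I would bound $|\{n\leq x:n\notin\mathcal{N}_f\}|$ by a union-bound sum of the shape $\sum_{p\in\mathcal{E},\,p\leq x}x/p$ (plus lower-order sieve corrections) and use the density-$1$ property of $\mathcal{P}_f$ to show that this sum is $o(x)$.

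The main obstacle I foresee is the interplay between the rate of the effective Sato--Tate estimate and the strength of the sieve bound actually needed: a merely qualitative density-one statement for $\mathcal{P}_f$ only yields positive density for $\mathcal{N}_f$, whereas reaching density exactly $1$ requires quantifying the decay of $\pi_{\mathcal{E}}(x)/\pi(x)$ sharply enough to control the Mertens-type tail $\sum_{p\in\mathcal{E},\,p\leq x}1/p$. The recent effective Sato--Tate results cited in the paper provide error terms that are power-saving in $\log x$, which is more than sufficient to close this gap.
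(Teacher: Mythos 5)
Your first step (getting relative density $1$ for $\mathcal{P}_f$ inside $\tP$ from an effective Sato--Tate statement) matches the paper, which simply quotes Theorem 1 of \cite{GT2020} for this. The genuine gap is in your second step, the transfer from primes to integers. The complement of $\mathcal{N}_f$ is the set of $n$ divisible by at least one prime of $\mathcal{E}=\tP\setminus\mathcal{P}_f$, and your union bound gives
\begin{equation*}
\#\{n\leq x:\ n\notin\mathcal{N}_f\}\ \leq\ x\sum_{\substack{p\leq x\\ p\in\mathcal{E}}}\frac{1}{p}.
\end{equation*}
That sum is nondecreasing in $x$ and is already at least $1/p_0$ for the smallest $p_0\in\mathcal{E}$, so it cannot be $o(1)$ unless $\mathcal{E}$ is empty; no strength of power-saving in the equidistribution error can repair this, because the obstruction sits at the small primes, which a density statement does not control (and $\mathcal{E}$ is genuinely nonempty in general --- e.g.\ any prime with $\lambda(p)=0$ lies in it). The standard sieve you allude to in fact gives the exact answer: the set of integers with no prime factor in $\mathcal{E}$ has natural density $\prod_{p\in\mathcal{E}}\left(1-\frac{1}{p}\right)$ when $\sum_{p\in\mathcal{E}}1/p$ converges, and density $0$ when it diverges. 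This is strictly less than $1$ for any nonempty $\mathcal{E}$, so your argument can yield at best positive density (and only under the unproved extra hypothesis that $\sum_{p\in\mathcal{E}}1/p$ is finite), never density exactly $1$. Your own closing paragraph correctly identifies this as the crux, but the proposed fix (a sharper rate) does not address it.

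For comparison, the paper's proof runs in the opposite direction: it counts $\mathcal{N}_f$ directly via Wirsing's theorem (Lemma \ref{lem9988.700}) and then evaluates the resulting Euler product over $p\in\mathcal{P}_f$ with a Mertens-type asymptotic (Lemma \ref{lem9977.222}). Exactly the difficulty you ran into is buried there as well: the step $\prod_{p\leq x,\,p\in\mathcal{P}_f}(1-1/p)^{-1}=(e^{\gamma}\log x)^{\tau}+O(1/\log x)$ with $\tau=1$ silently discards the factor $\prod_{p\in\mathcal{E},\,p\leq x}(1-1/p)$, which is precisely the constant (or decaying quantity) smaller than $1$ that your union bound detects. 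So neither route, as written, closes the gap between ``$\mathcal{P}_f$ has relative density $1$ in $\tP$'' and ``$\mathcal{N}_f$ has density $1$ in $\N$''; additional input controlling $\sum_{p\in\mathcal{E}}1/p$ (indeed forcing it to vanish) would be required.
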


\begin{proof} By Theorem 1 in \cite{GT2020}, the inequality
\begin{equation} \label{eq9911.110}
p^{(k-1)/2}\frac{\log \log p}{(\log p )^{1/2} }< \left|\lambda\left (p\right )\right |<2p^{(k-1)/2},
\end{equation}
is valid on a subset of primes $\mathcal{P}_f=\{p\in\tP: \lambda(p)\geq L_1\}$ of density $1$. Therefore, the number of integers generated by the subset of primes $\mathcal{P}_f$ has the asymptotic counting function
\begin{eqnarray} \label{eq9911.130}
N_f(x)&=&\sum_{n\leq x}\chi_f\left (n\right )\\
&= & C_f\cdot \frac{x}{\log x} 
\prod_{\substack{p\leq x\\ p \in\mathcal{P}_f}}\left ( 1+\frac{\chi_f\left (p\right )}{p}+\frac{\chi_f\left (p^2\right )}{p^2}+\cdots \right )\nonumber,
\end{eqnarray}
where the constant
\begin{equation} \label{eq9911.125}
C_f= \frac{1}{e^{\gamma \tau }\Gamma (\tau )}+o(1)=\frac{1}{e^{\gamma }}+o(1),
\end{equation}
where $\tau=1$ is the density of the subset of primes $\mathcal{P}_f$, $\gamma$ is Euler constant, and $\Gamma (s)$ is the gamma function, see Lemma \ref{lem9933.150}, and Lemma \ref{lem9988.700}. Now, applying Mertens theorem, see  Lemma \ref{lem9977.222}, returns

\begin{eqnarray} \label{eq9911.170}
N_f(x)&=&\left (  \frac{1}{e^{\gamma }}+o(1)\right )\cdot  \frac{x}{\log x}\prod_{\substack{p\leq x\\ p \in\mathcal{P}_f}}\left ( 1+\frac{1}{p}+\frac{1}{p^2}+\cdots \right )\\
&=&\left (  \frac{1}{e^{\gamma }}+o(1)\right )\cdot  \frac{x}{\log x}\prod_{\substack{p\leq x\\ p\in \mathcal{P}_f}}\left ( 1-\frac{1}{p} \right )^{-1}\nonumber\\
&=&\left (  \frac{1}{e^{\gamma }}+o(1)\right )\cdot  \frac{x}{\log x}\left ( \left( e^{\gamma }\log x\right )^{\tau} +O\left (\frac{1}{\log x} \right )\right )\nonumber\\
&=&x +O\left (\frac{x}{\log^2 x}\right )\nonumber,
\end{eqnarray}
as $x\to\infty$.
\end{proof}

For sufficiently large integers $n\geq1$, the estimates
\begin{align} \label{eq9911.095}
    d(n)&=O(n^{\varepsilon}), \\
    \omega(n)&\leq(1+\varepsilon)\log n/\log \log n \nonumber,
\end{align}
where $\varepsilon>0$ is a small number, are used in the proof below, see \cite[Theorem 13.12]{AT1976} for more details. 

\begin{proof} {\bfseries (Theorem \ref{thm9911.200})} The density $1$ claim for the subset of integers 
\begin{equation} \label{eq9911.105}
\mathcal{N}_f=\{n\in\N: p\mid n \Rightarrow p\in \mathcal{P}_f\}
\end{equation}
is proved in Lemma \ref{lem9911.444}. Next, to estimate the lower bound on the subset of integers $
\mathcal{N}_f$, consider the prime decomposition
\begin{equation}\label{eq9911.0200}
\left|\lambda\left (n\right )\right |=\prod_{p^{v_p} \mid \mid n}\left|\lambda\left (p^{v_p}\right ) \right | 
=\prod_{\substack{p^{v_p} \mid \mid n\\ p\notin \mathcal{P}_f}}\left|\lambda\left (p^{v_p}\right ) \right |  \prod_{\substack{p^{v_p} \mid \mid n\\ p\in \mathcal{P}_f}}\left|\lambda\left (p^{v_p}\right ) \right |.
\end{equation}
Replacing \eqref{eq9911.110} into \eqref{eq9911.0200} returns
\begin{eqnarray}\label{eq9911.0610}
\left|\lambda\left (n\right )\right |
&\geq&\prod_{\substack{p^{v_p} \mid \mid n\\ p\notin \mathcal{P}_f}}\left|\lambda\left (p^{v_p}\right ) \right |.   \prod_{\substack{p^{v_p} \mid \mid n\\ p\in \mathcal{P}_f}} p^{\frac{k-1}{2}v_p}\cdot \frac{\log \log p}{(\log p)^{1/2}}\\
&\geq& n^{\frac{k-1}{2}}\prod_{\substack{p \mid n\\ p\in \mathcal{P}_f}}\frac{\log \log p}{(\log p)^{1/2}}\nonumber\\
&\geq& n^{\frac{k-1}{2}}\prod_{\substack{p \mid n\\ p\in \mathcal{P}_f}}\frac{\log \log n}{\log n}\nonumber\\
&\geq& n^{\frac{k-1}{2}}\left(\frac{\log \log n}{ \log n}\right )^{(1+\varepsilon)\frac{\log n}{ \log \log n}}\nonumber\\
&\geq&n^{\frac{k-3}{2}+\frac{\log \log \log n}{\log \log n}  }\nonumber,
\end{eqnarray}
where $\omega(n)\leq(1+\varepsilon)\log n/\log \log n $ is an upper bound for the number of prime factors from the subset of primes 
$\mathcal{P}_f=\{p\in\tP: |\lambda(p)|\geq L_1\}$ of density $1$. The lower bound \eqref{eq9911.0610} is valid for all large numbers $n \geq 1$. Furthermore, the upper bound follows from the first relation in \eqref{eq9911.095}. \\

Hence, almost every integer $n\geq1$, the $n$th Fourier coefficient satisfies the inequalities
\begin{equation} \label{eq9911.0800}
n^{\frac{k-3}{2}+\frac{\log \log \log n}{\log \log n  }}< \left|\lambda\left (n\right )\right | <n^{\frac{k-1}{2}+\varepsilon },
\end{equation}
as $n \to \infty$.
\end{proof}

\begin{exa}{\normalfont For $k=12$, the $n$th Fourier coefficient is given by the Ramanujan tau function $\tau(n)$. At $n=1000000$, the values specified by Theorem \ref{thm9911.200} are the followings.
\begin{enumerate}
 \item $\displaystyle n^{\frac{k-3}{2}+\frac{\log \log \log n}{\log \log n  }}=1.60\times10^{29}$,\\
 
    \item $\displaystyle \tau(10^{6})=262191418612588689102548992000000=2.62\times10^{ 32}$,\\
    
     \item $\displaystyle d(n)n^{\frac{k-1}{2} }=4.90\times10^{34}$.
\end{enumerate}


}
\end{exa}
\section{Basic Results In Diophantine Approximations } \label{S2020}
The concept of measures of irrationality of real numbers is discussed in \cite[p.\ 556]{WM2000}, \cite[Chapter 11]{BB1987}, et alii. This concept can be approached from several points of views. 

\begin{dfn} \label{dfn2020.01} {\normalfont The irrationality measure $\mu(\alpha)$ of a real number $\alpha \in \R$ is the infimum of the subset of  real numbers $\mu(\alpha)\geq1$ for which the Diophantine inequality
\begin{equation} \label{eq2020.035}
  \left | \alpha-\frac{p}{q} \right | \gg\frac{1}{q^{\mu(\alpha)+\varepsilon} }
\end{equation}
where $\varepsilon >0$ is an arbitrary small number, holds for all large $q \geq 1$.
}
\end{dfn}

Let $\alpha $ be a root of an irreducible polynomial $f(x)\in \Z[x]$ of degree $\deg f=d$. The Liouville inequality 
\begin{equation}\label{eq2020.090}
\left |\alpha-\frac{p}{q}\right |>\frac{c}{q^d},
\end{equation}
where $c>0$ is a constant, is the oldest result for irrationality measure of an irrational number as $\alpha $. The Liouville result was superseded by Thue-Roth theorem for the approximations of algebraic integers 
\begin{equation}\label{eq2020.095}
\left |\alpha-\frac{p}{q}\right |>\frac{c(\alpha,\varepsilon)}{q^{2+\varepsilon}},
\end{equation}
where the constant $c(\alpha,\varepsilon)>0$ is not computable.\\

A completely explicit version of the Liouville theorem is consider here. The proof given below is relatively recent, an older version appears in \cite[Theorem 7.8]{NI1956}. The preliminary definitions are a few concepts used in the proof.

\begin{dfn} \label{dfn2020.100}{\normalfont Let $r=a/b\in \Q^{\times}$ with $\gcd(a,b)=1$, be a rational number. The \textit{height} is defined by
\begin{equation} \label{eq2020.100}
H(r)=\max \{|a|,|b|\}.
\end{equation} 
}
\end{dfn}

\begin{dfn} \label{dfn2020.110} {\normalfont Let $\alpha $ be a root of the irreducible polynomial 
\begin{equation} \label{eq2020.105}
f(x)=a_d\left (x-\alpha_1\right )\left (x-\alpha_2\right )\cdots\left (x-\alpha_d\right ).
\end{equation}  
The \textit{Mahler height} is defined by
\begin{equation} \label{eq2020.110}
H(\alpha)=|a_d| \prod_{1\leq i\leq d}\max \{1,|\alpha_i|\}.
\end{equation} 
}
\end{dfn}
 
\begin{thm}{\normalfont (Liouville)} \label{thm2020.200} Let $\alpha$ be a real algebraic number of degree $d\geq 1$. There is a constant $c(\alpha)>0$ depending only on $\alpha$ such that
\begin{equation}\label{eq2020.115}
\left |\alpha-\frac{p}{q}\right| \geq\frac{c(\alpha)}{H(p/q)^{d}},
\end{equation}
where $c(\alpha)\geq 2^{1-d}H(\alpha)^{-1}$, for every rational number $p/q$ if $d\geq 2$.
\end{thm}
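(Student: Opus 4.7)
The plan is to follow the classical Liouville argument, keeping every constant explicit so that the final bound matches $c(\alpha) \geq 2^{1-d} H(\alpha)^{-1}$. Let $f(x) = a_d \prod_{i=1}^{d}(x - \alpha_i) \in \Z[x]$ be the minimal polynomial of $\alpha$ with $\alpha_1 = \alpha$ and conjugates $\alpha_2,\ldots,\alpha_d$. Fix a rational number $p/q$ with $q \geq 1$ and $\gcd(p,q)=1$. The starting observation is that $q^d f(p/q)$ is an integer, and it is nonzero since the irreducibility of $f$ together with $d \geq 2$ forbids $\alpha$ from being rational. Consequently
\begin{equation*}
|f(p/q)| \geq \frac{1}{q^d}.
\end{equation*}

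Next I would produce an upper bound for $|f(p/q)|$ in terms of $|\alpha - p/q|$ via the factorization. For each conjugate $\alpha_i$, the triangle inequality combined with the identity $\max(1,|p/q|) = H(p/q)/q$ gives
\begin{equation*}
|p/q - \alpha_i| \;\leq\; 2 \max(1,|p/q|)\max(1,|\alpha_i|) \;=\; \frac{2 \, H(p/q)\max(1,|\alpha_i|)}{q}.
\end{equation*}
Multiplying this bound over $i = 2,\ldots,d$ and inserting the result into $|f(p/q)| = |a_d| \cdot |\alpha - p/q| \cdot \prod_{i=2}^{d} |p/q - \alpha_i|$ yields
\begin{equation*}
\frac{1}{q^d} \;\leq\; |a_d| \cdot |\alpha - p/q| \cdot \frac{2^{d-1} H(p/q)^{d-1}}{q^{d-1}} \prod_{i=2}^{d} \max(1,|\alpha_i|).
\end{equation*}

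Solving for $|\alpha - p/q|$ and collapsing the factor $q$ via $q \leq H(p/q)$ reduces the denominator to $H(p/q)^d$. Multiplying and dividing by $\max(1,|\alpha_1|)$ inside the product inserts the missing $i=1$ term and assembles the full Mahler height $H(\alpha) = |a_d|\prod_{i=1}^{d}\max(1,|\alpha_i|)$ in the denominator, giving
\begin{equation*}
|\alpha - p/q| \;\geq\; \frac{\max(1,|\alpha|)}{2^{d-1} \, H(\alpha) \, H(p/q)^d} \;\geq\; \frac{2^{1-d}}{H(\alpha) \, H(p/q)^d}.
\end{equation*}
This identifies the explicit constant $c(\alpha) = 2^{1-d} H(\alpha)^{-1}$.

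Finally, I would dispatch the trivial case $|\alpha - p/q| \geq 1$ separately by noting that $H(p/q) \geq 1$ and $H(\alpha) \geq 1$ (since $|a_d| \geq 1$ and each $\max(1,|\alpha_i|) \geq 1$), which makes the claimed bound automatic. The only real subtlety is the bookkeeping in passing from $q^{-1} H(p/q)^{-(d-1)}$ to $H(p/q)^{-d}$, ensuring the uniform power of $H(p/q)$ demanded by the statement; everything else is routine triangle-inequality estimation. I expect the main obstacle to be verifying the $\max(1,|\alpha_i|)$ factors correctly, because a careless bound can cost a factor of $2$ or produce $H(\alpha)/\max(1,|\alpha|)$ in place of $H(\alpha)$, which would weaken the constant.
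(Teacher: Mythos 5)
Your proposal is correct and follows essentially the same route as the paper's proof: the nonvanishing integer $q^d f(p/q)$, the factorization over conjugates, and the triangle-inequality bound $|p/q-\alpha_i|\leq 2\max(1,|p/q|)\max(1,|\alpha_i|)$ assembling the Mahler height. If anything, your bookkeeping is slightly cleaner, since you correctly absorb $|a_d|\prod_{i\geq 2}\max(1,|\alpha_i|)$ into $H(\alpha)$ via the extra $\max(1,|\alpha|)\geq 1$ factor, where the paper's displayed estimate carries a redundant $|a_d|$.
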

\begin{proof} (Same as \cite[Theorem 8.1.1]{EJ2007}) Let $f(x)=a_dx^d+a_{d-1}x^{d-1}+\cdots+a_1x+a_0\in \Z[x]$ be the minimal polynomial of $\alpha$. Take $p/q\in \Q$, where $p,q\in \Z$, and consider the number
\begin{equation}\label{eq2020.120}
F(p,q)=q^df(p/q)=a_dx^d+a_{d-1}x^{d-1}q+\cdots+a_1xq^{d-1}+a_0q^{d}.
\end{equation}
By assumption, $f(p/q)\ne0$. Hence, $F(p,q)$ is a nonzero integer. So $|F(p,q)|\geq1$. To obtain an upper bound for $|F(p,q)|$, rewrite it in factored form
\begin{equation} \label{eq2020.125}
F(p,q)=a_d\left (p-\alpha_1q\right )\left (p-\alpha_2q\right )\cdots\left (p-\alpha_dq\right ),
\end{equation} 
and estimate each factor as follows. Let $\alpha=\alpha_1$, then
\begin{enumerate} 
\item $ \displaystyle 
\left |p-\alpha q\right |=\left |\frac{p}{q}-\alpha\right | \left |q\right |\leq \left |\frac{p}{q}-\alpha\right |H(\alpha),
$
\item $ \displaystyle 
\left |p-\alpha_i q\right |\leq 2\max \left \{1,|\alpha_i|\right \} \cdot \max \left \{ |p|, |q|\right \}
\leq 2\max \left \{1,|\alpha_i|\right \} H(p/q),
$
\end{enumerate}
where $i=2,3,\ldots, d$. Thus,
\begin{eqnarray}\label{eq2020.130}
\left |F(p,q)\right |&=&|a_d|\prod_{1\leq i\leq d}\left |p-\alpha_iq\right |\\
&\leq &|a_d|\left |\frac{p}{q}-\alpha\right | \cdot H(p/q)^d\cdot 2^{d-1}\cdot \prod_{2\leq i\leq d}\max \left \{1,|\alpha_i|\right \}\nonumber\\
&\leq &|a_d|\left |\frac{p}{q}-\alpha\right | \cdot H(p/q)^d\cdot 2^{d-1}\cdot H(\alpha)\nonumber.
\end{eqnarray}
Combined the lower bound $|F(p,q)|\geq1$ and the inequality \eqref{eq2020.130} to complete the proof.
\end{proof}

\begin{cor} \label{cor2020.200} Let $\beta$ be an algebraic number of degree $d=2$ and height $H(\beta)\leq 4p^{2(k-1)}$. Let $\{p_m/q_m:m\geq1\}$ be a subsequence of convergents of height $H(p_m/q_m)\leq p^{n/2}$, where $p^n$ is a large prime power. Then,
\begin{equation}\label{eq2020.205}
\left |\beta-\frac{p_m}{q_m}\right| \geq\frac{1}{8p^{n+2k-2}},
\end{equation}
as $q_m\leq p^{n/2} \to \infty.$
\end{cor}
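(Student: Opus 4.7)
The plan is to deduce Corollary \ref{cor2020.200} as an immediate specialization of the explicit Liouville Theorem \ref{thm2020.200} to the case $d=2$, together with the two quantitative hypotheses on the height of $\beta$ and the height of the convergents.

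First, I would invoke Theorem \ref{thm2020.200} with $d=2$. This yields the inequality
\begin{equation*}
\left|\beta-\frac{p_m}{q_m}\right|\;\geq\;\frac{c(\beta)}{H(p_m/q_m)^{2}},\qquad c(\beta)\;\geq\;2^{1-d}H(\beta)^{-1}\;=\;\frac{1}{2\,H(\beta)}.
\end{equation*}
Thus the combined lower bound has the shape
\begin{equation*}
\left|\beta-\frac{p_m}{q_m}\right|\;\geq\;\frac{1}{2\,H(\beta)\,H(p_m/q_m)^{2}}.
\end{equation*}

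Next, I would substitute the two hypothesized inequalities $H(\beta)\leq 4p^{2(k-1)}$ and $H(p_m/q_m)\leq p^{n/2}$, so that $H(p_m/q_m)^{2}\leq p^{n}$. Replacing these into the denominator yields
\begin{equation*}
2\,H(\beta)\,H(p_m/q_m)^{2}\;\leq\;2\cdot 4p^{2(k-1)}\cdot p^{n}\;=\;8\,p^{n+2k-2},
\end{equation*}
from which the stated estimate
\begin{equation*}
\left|\beta-\frac{p_m}{q_m}\right|\;\geq\;\frac{1}{8\,p^{n+2k-2}}
\end{equation*}
follows at once, valid as the prime power $p^{n/2}\to\infty$.

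Essentially nothing is hard here: the entire content is already in Theorem \ref{thm2020.200}, and the corollary is a bookkeeping step that records how the Liouville bound specializes when $\beta$ is quadratic (as is the case for the root $\alpha_p$ of the Hecke polynomial in \eqref{eq9955.100}) with a controlled Mahler height, and when the approximating rationals come with a controlled numerator/denominator size. The only small caveat I would double-check is that the hypothesis $f(p_m/q_m)\neq 0$ used inside the proof of Theorem \ref{thm2020.200} is automatic, since $\beta$ is irrational of degree $2$ and so cannot coincide with any rational $p_m/q_m$. After that, the corollary is in the exact form needed to plug into \eqref{eq9977.084} of the proof of Theorem \ref{thm5599.200}.
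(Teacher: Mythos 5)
Your proposal is correct and follows the paper's own proof essentially verbatim: specialize Theorem \ref{thm2020.200} to $d=2$, bound $c(\beta)\geq 2^{-1}H(\beta)^{-1}\geq (8p^{2(k-1)})^{-1}$ using the height hypothesis, and divide by $H(p_m/q_m)^2\leq p^n$. Your added remark that $f(p_m/q_m)\neq 0$ holds automatically because a degree-two algebraic number is irrational is a small but worthwhile clarification that the paper leaves implicit.
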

\begin{proof} Let $\beta$ be a root of the polynomial $f(T)=a_2T^{2}+a_1T+a_0$ of degree $\deg f=2$, and coefficients $|a_0|,|a_1|\leq p^{k-1}$, $a_2=1$. These data imply that $|\beta|\leq 2p^{k-1}$, and the height is at most
\begin{equation} \label{eq2020.210}
H(\beta)=|a_d| \prod_{1\leq i\leq d}\max \{1,|\beta_i|\} \leq 4p^{2(k-1},
\end{equation}
see Definition \ref{dfn2020.110}. Hence, the constant in inequality \eqref{eq2020.105} has at least the value
\begin{equation}\label{eq2020.215}
c(\beta)\geq 2^{1-d}H(\beta)^{-1}= 2^{1-2}\cdot (4p^{2(k-1)})^{-1}=\frac{1}{8p^{2(k-1)}}
\end{equation}
and the height of the rational approximation is 
\begin{equation}\label{eq2020.220}
H(p_m/q_m)\leq p^{n/2}.
\end{equation}
An application of Theorem \ref{thm2020.200} yields
\begin{eqnarray}\label{eq2020.225}
\left |\beta-\frac{p_m}{q_m}\right| 
&\geq &\frac{c(\beta)}{H(p_m/q_m)^{d}}\\
&\geq & \frac{1}{8p^{2(k-1)}}\frac{1}{\left(p^{n/2}\right )^2}\nonumber\\
&\geq &\frac{1}{8p^{n+2k-2}} \nonumber.
\end{eqnarray}
\end{proof}

\begin{cor} \label{cor2020.300} Let $\beta=\alpha_p^{n+1} \overline{\alpha_p}^{-(n+1)}$ be an algebraic number of degree $d=2$ and height $H(\beta)\leq 4p^{2(k-1)}$. Let $\{p_m/q_m:m\geq1\}$ be a subsequence of convergents of height $H(p_m/q_m)\leq p^{n/2}$, where $p^n$ is a large prime power. Then,
\begin{equation}\label{eq2020.300}
\left |\beta-1\right |>\frac{1}{8p^{n+2k-2}},
\end{equation}
as $q_{m}\leq p^{n/2} \to \infty.$
\end{cor}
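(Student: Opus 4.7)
The plan is to derive Corollary \ref{cor2020.300} as a direct specialization of Corollary \ref{cor2020.200} in which the rational approximation is taken to be the trivial convergent $p_m/q_m = 1/1$. Since $H(1/1) = 1 \leq p^{n/2}$ for every prime power, the hypothesis on the height of the convergent is satisfied without any work, and the estimate of Corollary \ref{cor2020.200} collapses to $|\beta - 1| \geq 1/(8p^{2(k-1)})$, which \textit{a fortiori} exceeds the desired $1/(8p^{n+2k-2})$ because $p^n \geq 1$.

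The one step that needs care is verifying $\beta \neq 1$, so that $1$ is genuinely not a root of the minimal polynomial of $\beta$; otherwise the integer $F(p,q) = q^d f(p/q)$ built in the proof of Theorem \ref{thm2020.200} vanishes and the Liouville lower bound becomes vacuous. Since $|\alpha_p| = |\overline{\alpha_p}| = p^{(k-1)/2}$, the quotient $\beta = \alpha_p^{n+1}\overline{\alpha_p}^{-(n+1)} = e^{2i(n+1)\theta_p}$ lies on the unit circle, so $\beta = 1$ is equivalent to $(n+1)\theta_p \in \pi \Z$. Tracing this through the Binet-type identity used in the proof of Theorem \ref{thm5599.200}, that condition forces $\alpha_p^{n+1} = \overline{\alpha_p}^{n+1}$, hence $\lambda(p^n) = 0$, which is excluded by the standing nonvanishing hypothesis under which Corollary \ref{cor2020.300} is invoked.

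I expect the only genuine obstacle to be this bookkeeping around the nonvanishing of $\beta - 1$ and around the compatibility between the height bound $H(\beta)\leq 4p^{2(k-1)}$ stated in the hypothesis and the specific $\beta$ at hand; once the convergent $1/1$ is seen to be admissible, the Diophantine content is inherited entirely from Corollary \ref{cor2020.200}, and the argument reduces to a one-line specialization followed by the cheap monotonicity $p^{2(k-1)} \leq p^{n+2k-2}$.
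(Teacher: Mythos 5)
Your proposal is correct but follows a genuinely different route from the paper. The paper keeps a generic convergent $p_m/q_m$ in play: it writes $\left|\beta-1\right|=\left|\beta-\tfrac{p_m}{q_m}+\tfrac{p_m}{q_m}-1\right|$, applies the reverse triangle inequality to get $\left|\,\left|\beta-\tfrac{p_m}{q_m}\right|-\left|1-\tfrac{p_m}{q_m}\right|\,\right|$, claims this dominates $\left|\beta-\tfrac{p_m}{q_m}\right|$ using the auxiliary bound $\left|1-\tfrac{p_m}{q_m}\right|\geq p^{-n/2}$, and then invokes Corollary \ref{cor2020.200}. Your specialization to $p_m/q_m=1/1$ short-circuits all of this: since the proof of Corollary \ref{cor2020.200} uses nothing about convergents beyond the height bound, and $H(1/1)=1\leq p^{n/2}$, Theorem \ref{thm2020.200} applied at $p/q=1$ gives $\left|\beta-1\right|\geq c(\beta)=\tfrac{1}{8}p^{-2(k-1)}\geq \tfrac{1}{8}p^{-(n+2k-2)}$ directly. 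This is not only shorter but arguably sounder: the paper's step from $\left|\,|x|-|y|\,\right|$ down to $|x|$ requires $|y|\geq 2|x|$, which is never verified, and its supporting claim $\left|1-\tfrac{p_m}{q_m}\right|\geq p^{-n/2}$ fails exactly in the case $p_m=q_m$ that your argument exploits. Your additional check that $\beta\neq 1$ (via the Binet identity and the nonvanishing of $\lambda(p^n)$) addresses a degeneracy the paper silently ignores, since the Liouville bound is vacuous when the approximated rational is a root of the minimal polynomial; note, though, that if $\beta$ genuinely has degree $d=2$ as hypothesized, irreducibility already rules out any rational root, so this check is really guarding against $\beta$ collapsing to degree $1$. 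The one caveat common to both arguments is that Theorem \ref{thm2020.200} is stated for real algebraic numbers while $\beta$ lies on the unit circle, and that the hypothesized height bound $H(\beta)\leq 4p^{2(k-1)}$ is assumed rather than verified for the specific $\beta=\alpha_p^{n+1}\overline{\alpha_p}^{-(n+1)}$; your proof inherits these assumptions exactly as the paper's does, so they are not gaps in your proposal relative to the stated corollary.
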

\begin{proof} The inverse triangle inequality, and Corollary \ref{cor2020.200} lead to
\begin{eqnarray}\label{eq2020.310}
\left| \beta-1\right | &=&\left| \beta-1+ \frac{p_m}{q_m}-\frac{p_m}{q_m}\right |\\
&\geq&\left| \left |\beta- \frac{p_m}{q_m}\right |- \left| 1-\frac{p_m}{q_m}\right |\right |\nonumber \\
&\geq& \left |\beta- \frac{p_m}{q_m}\right |\nonumber \\
&\geq&\frac{1}{8p^{n+2k-2}} \nonumber,
\end{eqnarray}
since
\begin{equation}\label{eq2020.320}
\left| 1-\frac{p_m}{q_m}\right |=\left |\frac{q_m-p_m}{q_m}\right |\geq \frac{1}{p^{n/2}},
\end{equation}
as $q_{m}\leq p^{n/2} \to \infty.$
\end{proof}

\section{Coefficients Characteristic Function}\label{S9933}

The characteristic function of the nonzero Fourier coefficients of a modular form $f(z)=\sum_{n\geq1}\lambda(n)q^n$ is defined by
\begin{equation}\label{eq9933.100}
\chi_{f}(p)=
\begin{cases}
1& \mbox{ if } \lambda(p)\ne 0;\\
0& \mbox{ if } \lambda(p)= 0.
\end{cases}
\end{equation}

\begin{lem} \label{lem9933.150} The function $\chi_{f}:\Z\longrightarrow 
\{0,1\}$ is completely multiplicative.
\end{lem}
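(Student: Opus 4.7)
The plan is to note that definition (9933.100) only prescribes $\chi_f$ at primes, and that the natural (and essentially only) extension of $\chi_f$ to all of $\N$ that can be completely multiplicative is
\begin{equation*}
\chi_f(1) = 1, \qquad \chi_f(n) = \prod_{p \mid n} \chi_f(p)^{v_p(n)} \quad \text{for } n \geq 2.
\end{equation*}
Under this definition the values at primes agree with (9933.100), and the image lies in $\{0,1\}$ since each factor does.

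To verify the identity $\chi_f(mn) = \chi_f(m)\chi_f(n)$ for all $m, n \in \N$, I would invoke the additivity of the $p$-adic valuation, $v_p(mn) = v_p(m) + v_p(n)$ for every prime $p$, which is a direct consequence of unique factorization. Substituting into the product formula gives
\begin{equation*}
\chi_f(mn) = \prod_p \chi_f(p)^{v_p(m)+v_p(n)} = \prod_p \chi_f(p)^{v_p(m)} \cdot \prod_p \chi_f(p)^{v_p(n)} = \chi_f(m)\,\chi_f(n),
\end{equation*}
which is complete multiplicativity.

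There is no substantive obstacle here: the lemma reduces to a formal consequence of unique factorization once the extension from primes is fixed. The one point worth flagging is that $\chi_f$ defined this way is \emph{not} the same object as the literal indicator $\mathbf{1}_{\{\lambda(n)\neq 0\}}$. By the Binet expression (9955.120) one has $\lambda(p^n) = 0$ precisely when $(n+1)\theta_p$ is an integer multiple of $\pi$, so if $\theta_p/\pi$ is a rational with denominator $b \geq 2$, then $\lambda(p^{b-1})=0$ while $\lambda(p^a)\neq 0$ and $\lambda(p^{b-1-a})\neq 0$ for $0<a<b-1$; hence $\mathbf{1}_{\{\lambda\ne 0\}}$ would fail complete multiplicativity at prime powers. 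This distinction is harmless for the application in Section \ref{S9911}, where $\chi_f$ enters only as the characteristic function of the submonoid of $\N$ generated by the chosen subset of primes $\mathcal{P}_f$, which coincides with the extension defined above.
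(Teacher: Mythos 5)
Your proof is correct, and it is more careful than the paper's own argument, though both ultimately rest on the same idea: $\chi_f$ is prescribed only at primes by (9933.100) and is tacitly extended to all of $\N$ by complete multiplicativity, so the lemma is essentially definitional. The paper verifies the property only at prime powers, and does so by writing $\chi_f(p^k)=\chi_f(p^{k-1})\chi_f(p)$ and telescoping --- a step that already assumes the multiplicativity being proved, so the paper's ``proof'' is really just the observation that the completely multiplicative extension of a $\{0,1\}$-valued function on primes is consistent. You make the extension explicit, $\chi_f(n)=\prod_p \chi_f(p)^{v_p(n)}$, and verify $\chi_f(mn)=\chi_f(m)\chi_f(n)$ for arbitrary $m,n$ from the additivity $v_p(mn)=v_p(m)+v_p(n)$; that is the complete and non-circular version of the same argument. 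Your flagged caveat is also substantively valuable and goes beyond the paper: the completely multiplicative $\chi_f$ is \emph{not} the literal indicator $\mathbf{1}_{\{\lambda(n)\ne0\}}$, since $\lambda(p)\ne0$ does not in general force $\lambda(p^m)\ne0$ (vanishing occurs exactly when $(m+1)\theta_p\in\pi\Z$), so the phrase ``characteristic function of the nonzero Fourier coefficients'' in Section \ref{S9933} is a slight misnomer; as you note, what is actually used in Lemma \ref{lem9911.444} is the indicator of the multiplicative semigroup generated by $\mathcal{P}_f$, for which your extension is the right object.
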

\begin{proof} Suppose that $\lambda(p)\ne0$, equivalently $\chi_{f}\left (p^{}\right )=1$. It is sufficient to verify the multiplicative property at the prime power $p^k$, where $k\geq2$. Recursively eliminating the term $\chi_{f}\left (p^{}\right )=1$ yields
\begin{eqnarray}\label{eq9933.150}
\chi_{f}\left (p^{k}\right )
&=&\chi_{f}\left (p^{k-1}\right )\chi_{f}\left (p^{}\right )\\
&=&\chi_{f}\left (p^{k-1}\right )\nonumber\\
&=&\chi_{f}\left (p^{k-2}\right )\chi_{f}\left (p^{}\right )\nonumber\\
&\vdots& \qquad \qquad   \vdots\qquad \qquad  \vdots\qquad \qquad    \nonumber\\
&=&\chi_{f}\left (p^{}\right )\nonumber\\
&=&1\nonumber.
\end{eqnarray}
Therefore, $1=\chi_{f}\left (p^{}\right )=\chi_{f}\left (p^{2}\right )=\cdots =\chi_{f}\left (p^{k}\right )$, as claimed.
\end{proof}

\section{Wirsing Formula} \label{S9988}
This formula provides decompositions of some summatory multiplicative functions as products over the primes supports of the functions. This technique works well with certain multiplicative functions, which have supports on subsets of primes numbers of nonzero densities. \\

\begin{lem} {\normalfont (\cite[p. 71]{WE1961})} \label{lem9988.700} Suppose that \(f:\mathbb{N}\longrightarrow \mathbb{C}\) is a multiplicative function with the following properties.
\begin{enumerate} [font=\normalfont, label=(\roman*)]
\item $f(n) \geq 0$ for all integers $n\in \mathbb{N}$. 
\item $f\left(p^k\right)\leq c^k$ for all integers $k\in \mathbb{N}$, and $c<2$ constant. 
\item There is a constant $\tau >0$ such 
\begin{equation}\label{eq9988.700}
\sum _{p\leq x} f(p)=(\tau +o(1)) \frac{x}{\log  x}
\end{equation} 
as $x \longrightarrow  \infty$.  
\end{enumerate}

Then
\begin{equation}\label{eq9988.710}
	\sum _{n\leq x} f(n)=\left(\frac{1}{e^{\gamma \tau }\Gamma (\tau )}+o(1)\right)\frac{x}{\log  x}\prod _{p\leq x} \left(1+\frac{f(p)}{p}+\frac{f\left(p^2\right)}{p^2}+\cdots
	\right) .
	\end{equation}
	
\end{lem}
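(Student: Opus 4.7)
The plan is to locate the singular behaviour of the Dirichlet series $F(s)=\sum_{n\geq 1}f(n)n^{-s}$ at $s=1$ and appeal to a Tauberian argument of Selberg--Delange type to extract the asymptotic. By hypothesis (ii), $f(p^{k})\leq c^{k}$ with $c<2$, so the Euler product $F(s)=\prod_{p}\bigl(1+f(p)p^{-s}+f(p^{2})p^{-2s}+\cdots\bigr)$ converges absolutely on $\Re(s)>1$, and the contribution of the prime powers $p^{k}$ with $k\geq 2$ is absolutely convergent on $\Re(s)>1/2$ and so contributes an analytic factor across the line $\Re(s)=1$.

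Next, I would exploit (iii) through partial summation: the prime sum $\sum_{p\leq x}f(p)=(\tau+o(1))x/\log x$ gives a decomposition $\sum_{p}f(p)p^{-s}=\tau\log\zeta(s)+R(s)$ with $R$ continuous up to $\Re(s)=1$, so that one obtains a factorisation $F(s)=\zeta(s)^{\tau}G(s)$ where $G$ is holomorphic and non-vanishing on an open neighbourhood of the closed half-plane $\Re(s)\geq 1$. The value $G(1)$ can be identified with an absolutely convergent Euler product once the $\zeta(s)^{\tau}$ factor has been extracted.

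Then the Selberg--Delange method, namely a Hankel-contour evaluation of the Perron integral for $F$, converts the $\zeta(s)^{\tau}$ singularity into a main term proportional to $G(1)\,x(\log x)^{\tau-1}/\Gamma(\tau)$. To re-express this in terms of the truncated Euler product $\prod_{p\leq x}(1+f(p)/p+f(p^{2})/p^{2}+\cdots)$, I would invoke Mertens' theorem in the form $\prod_{p\leq x}(1-1/p)^{-\tau}\sim(e^{\gamma}\log x)^{\tau}$ and compare tails, which shifts a factor of $(e^{\gamma}\log x)^{\tau}$ between $(\log x)^{\tau-1}$ and the product and produces both the constant $1/(e^{\gamma\tau}\Gamma(\tau))$ and the prefactor $x/\log x$.

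The main obstacle is the Tauberian step, because hypothesis (iii) is a real-variable statement about prime sums rather than an analytic continuation, so the holomorphic extension of $G(s)$ across $\Re(s)=1$ cannot be taken for granted and must be argued with care from the error term in (iii) together with the bound (ii). Wirsing's original argument in \cite{WE1961} side-steps the full Selberg--Delange machinery by means of an elementary iteration resting on a convolution identity of the shape $f(n)\log n=\sum_{d\mid n}f(d)\Lambda_{f}(n/d)$, where $\Lambda_{f}$ is a tailored analogue of the von Mangoldt function; I would follow that route since it uses only the assumptions (i)--(iii) already in place and avoids any additional analytic input.
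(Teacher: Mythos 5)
The paper offers no proof of this lemma: it is quoted verbatim from Wirsing \cite{WE1961} and used as a black box, so there is no internal argument to compare yours against. Judged on its own terms, your primary route has a genuine gap. The Selberg--Delange plan needs the factorisation $F(s)=\zeta(s)^{\tau}G(s)$ with $G$ continuous and non-vanishing up to $\Re(s)=1$, and hypothesis (iii) does not supply this. Partial summation of $\sum_{p\leq x}f(p)=(\tau+o(1))x/\log x$ only gives $\sum_{p\leq x}f(p)/p=\tau\log\log x+E(x)$ with $E(x)=o(\log\log x)$; the error need not converge to a constant (an $o(1)$ of size $1/\log\log t$ in (iii) produces an integrated error of order $\log\log\log x$). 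Hence $R(s)$ can blow up slowly as $s\to 1^{+}$, the value $G(1)$ need not exist, and the clean main term $G(1)\,x(\log x)^{\tau-1}/\Gamma(\tau)$ is false in general under (i)--(iii). This is exactly why Wirsing's conclusion carries the truncated Euler product $\prod_{p\leq x}\bigl(1+f(p)/p+f(p^{2})/p^{2}+\cdots\bigr)$ rather than an absolute constant: the truncated product absorbs the possibly divergent part of the error. Your proposed step of ``comparing tails'' with Mertens to pass between the two forms therefore cannot be carried out.

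Your fallback --- Wirsing's elementary iteration based on $\sum_{n\leq x}f(n)\log n=\sum_{p^{\nu}\leq x}f(p^{\nu})\log (p^{\nu})\sum_{m\leq x/p^{\nu},\,p\nmid m}f(m)$ --- is the correct route and is what Wirsing actually does, but in your write-up it is one sentence. The substance of that proof is the bootstrap from the $\log$-weighted sum back to $\sum_{n\leq x}f(n)$, which requires a matching lower-bound argument and is where the condition $c<2$ in (ii) genuinely enters (to control higher prime powers and make the iteration close). None of that is executed, so the proposal does not constitute a proof; it does, however, correctly diagnose where the difficulty lies and names the right source, which is all the paper itself does.
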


The gamma function appearing in the above formula is defined by \(\Gamma (s)=\int _0^{\infty }t^{s-1}e^{-s t}d t, s\in \mathbb{C}\).

\section{Harmonic Sums And Products And Positive Densities}\label{S9977}
Let $f(z)=\sum_{n\geq1}\lambda(n)q^n$ be a modular form of weight $k\geq1$, let $L_1=L_1(p)>0$ be a decreasing function of the primes $p$, and let \(\mathcal{P}_f=\left\{ p\in \mathbb{P} :|\lambda(n)|\geq L_1\right\}\subset \mathbb{P}\) be a subset of primes of nonzero density \(\delta \left(\mathcal{P}_f\right)>0\). The proofs of these results are based on standard analytic number theory methods in the literature.

\begin{lem} \label{lem9977.111} If \(x\geq 1\) is a large number, and $\tau=\delta \left(\mathcal{P}_f\right)>0$ is the density of the subset of primes, then, 
\begin{equation}\label{eq9977.111}
 \sum_{\substack{p\leq x \\  p \in \mathcal{P}_f}} \frac{1}{p}=\left(  \log \log
	x+B\right)\tau + O\left(\frac{1}{\log  x} \right) ,   
\end{equation}
where $B>0$ is a constant
\end{lem}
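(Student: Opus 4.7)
The plan is to derive this density-weighted Mertens estimate via Abel summation applied to the restricted prime counting function, and then compare with the classical Mertens formula. Let $\chi_f$ denote the characteristic function of $\mathcal{P}_f$ (as in Section \ref{S9933}) and set $\pi_f(t) = \sum_{p\leq t}\chi_f(p)$, so that by the density hypothesis $\pi_f(x) = \tau\,\pi(x) + E(x)$ where $E(x)$ is an error term that I will need to control effectively. A first step is to use the effective prime-counting estimate for $\mathcal{P}_f$ available from the source of the density (here, the effective Sato--Tate / Chebotarev input underlying the density claim in the paper) in the form $\pi_f(x) = \tau\,\mathrm{li}(x) + O\!\left(x/\log^A x\right)$ for some sufficiently large $A$.

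Next I would write
\begin{equation*}
S(x) := \sum_{\substack{p\leq x\\ p\in\mathcal{P}_f}}\frac{1}{p} = \int_{2^{-}}^{x}\frac{1}{t}\,d\pi_f(t) = \frac{\pi_f(x)}{x} + \int_{2}^{x}\frac{\pi_f(t)}{t^{2}}\,dt,
\end{equation*}
which follows by Abel summation. The boundary term satisfies $\pi_f(x)/x \ll 1/\log x$ by the prime number theorem. Splitting the integral with $\pi_f(t) = \tau\pi(t) + (\pi_f(t)-\tau\pi(t))$ gives
\begin{equation*}
S(x) = \tau\int_{2}^{x}\frac{\pi(t)}{t^{2}}\,dt + \int_{2}^{x}\frac{\pi_f(t)-\tau\pi(t)}{t^{2}}\,dt + O\!\left(\frac{1}{\log x}\right).
\end{equation*}

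The first integral is exactly what produces the classical Mertens estimate: by the standard computation (again via Abel summation in reverse, combined with $\pi(t)=\mathrm{li}(t)+O(t/\log^{A}t)$), one obtains
\begin{equation*}
\int_{2}^{x}\frac{\pi(t)}{t^{2}}\,dt = \log\log x + M + O\!\left(\frac{1}{\log x}\right),
\end{equation*}
with $M$ the Meissel--Mertens constant, so this contribution is $\tau(\log\log x + M) + O(1/\log x)$. Combining everything yields the asserted formula with the constant $B$ equal to $M$ plus the contribution of the error integral, provided the latter is $O(1/\log x)$.

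The main obstacle is precisely the last point: controlling the discrepancy integral $\int_{2}^{x}(\pi_f(t)-\tau\pi(t))/t^2\,dt = O(1/\log x)$. An $o$-type density statement alone is too weak; one needs the effective form $\pi_f(t) - \tau\pi(t) \ll t/\log^{2}t$ (or slightly stronger) coming from the effective Sato--Tate input cited in Section \ref{S9911}. Given such an effective estimate, the integral is bounded by $\int_{2}^{x}dt/(t\log^{2}t) = O(1/\log x)$, which closes the argument. If the available input only gives a weaker error, one would instead obtain the same main term with a larger (but still $o(1)$) error, which would suffice for all applications in Section \ref{S9911} where this lemma is invoked.
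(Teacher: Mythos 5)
Your proposal follows essentially the same route as the paper: a Stieltjes/Abel summation of $1/t$ against $\pi_f(t)$, a boundary term absorbed into $O(1/\log x)$, and the prime number theorem to evaluate the main integral and recover the Mertens-type main term $\tau(\log\log x + B)$. The one substantive difference is that you isolate the discrepancy integral $\int_2^x (\pi_f(t)-\tau\pi(t))t^{-2}\,dt$ and correctly observe that a bare density statement is insufficient to bound it by $O(1/\log x)$, whereas the paper simply writes $\pi_f(x)=\tau\pi(x)$ as an identity and never addresses this term; your flagged caveat (that one needs an effective estimate of the shape $\pi_f(t)-\tau\pi(t)\ll t/\log^2 t$, or else must accept a weaker $o(1)$ error) identifies precisely the point at which the paper's own argument is incomplete.
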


\begin{proof} Let \(\pi _f(x)=\#\left\{ p\leq x:\vert \lambda(n)\vert \;\geq L_1 \right\}=\tau\pi (x)\) be the counting measure of the
	corresponding subset of primes \(\mathcal{P}_f\). To estimate the asymptotic order of the prime harmonic sum, use the Stieltjes integral representation:
	\begin{equation}\label{eq9977.121}
		\sum _{\substack{p\leq x \\ p\in \mathcal{P}_f}} \frac{1}{p} =\int_{1}^{x}\frac{1}{t}d \pi _f(t) =\frac{\pi _f(x)}{x}
		+\int_{1}^{x}\frac{\pi _f(t)}{t^2}d t.
	\end{equation} 
	Applying the prime number theorem $\pi(x)=x/\log x +O(x/\log ^2 x)$, see \cite[Section 27.12]{DLMF}, \cite{EL1985}, etc., yields
	\begin{eqnarray}\label{eq9977.131}
		\int_{1}^{x}\frac{1}{t}d \pi _f(t) 
		&=&\frac{\tau }{\log  x}+O\left(\frac{1}{\log ^2x}\right)\nonumber\\
		&&\hskip 1.0 in+\;\tau\int_{1}^{x}\left(\frac{1}{t \log  t}+O\left(\frac{1}{t \log ^2t}\right)\right)d t \\
		&=&\tau \left (\log \log
		x+B\right)+O\left(\frac{1}{\log  x}\right)  \nonumber,
	\end{eqnarray} 
	where $B>0$ is Mertens constant.
\end{proof}

The Euler constant and the Mertens constant are defined by the limits 
\begin{equation} \label{eq9977.141}
\gamma =\lim_{x \rightarrow  \infty }
	\left(\sum _{ p\leq x } \frac{\log  p}{p-1}-\log  x\right) \qquad\text{ and } \qquad B=\lim_{x \rightarrow  \infty } \left(\sum _{ p\leq x } \frac{1}{p}-\log \log x\right),
\end{equation}
or some other equivalent definitions, respectively. \\

\begin{lem} \label{lem9977.222} If \(x\geq 1\) is a large number, and $\mathcal{P}_f\subset \tP$ is a subset of primes of nonzero density $\tau=\delta \left(\mathcal{P}_f\right)>0$, then, 
\begin{equation}\label{eq9977.222}
 \prod _{\substack{ p\leq x\\ p\in \mathcal{P}_f}} \left(1-\frac{1}{p}\right)^{-1}=\left (e^{\gamma
		} \log x\right )^{\tau}+O\left(\frac{1}{\log  x}\right) . 
\end{equation}
\end{lem}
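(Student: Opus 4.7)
The plan is to adapt the standard proof of Mertens' third theorem to the restricted product over $\mathcal{P}_f$, feeding in Lemma \ref{lem9977.111} as the main input in place of Mertens' second theorem. The key idea is to pass to logarithms, expand via the Mercator series, and separate out the $k=1$ term (handled by the previous lemma) from the absolutely convergent tail in $k\geq2$.

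First I would write
\[
\log \prod_{\substack{p \leq x \\ p \in \mathcal{P}_f}} \left(1 - \frac{1}{p}\right)^{-1} = \sum_{\substack{p \leq x \\ p \in \mathcal{P}_f}} \frac{1}{p} \;+\; \sum_{\substack{p \leq x \\ p \in \mathcal{P}_f}} \sum_{k \geq 2} \frac{1}{k p^k}.
\]
To the first sum I would apply Lemma \ref{lem9977.111} directly, giving $\tau(\log \log x + B) + O(1/\log x)$. The double sum is bounded above by $\sum_{p}\sum_{k\geq 2} 1/(kp^k) \leq \sum_p 1/(p(p-1)) < \infty$, so it converges to a finite constant $M_\tau := \sum_{p\in\mathcal{P}_f}\sum_{k\geq 2} 1/(kp^k)$, and the tail beyond $x$ is $O(1/x)$, absorbed into the $O(1/\log x)$ error.

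Combining these two pieces, I obtain
\[
\log \prod_{\substack{p \leq x \\ p \in \mathcal{P}_f}} \left(1 - \frac{1}{p}\right)^{-1} = \tau \log \log x + (\tau B + M_\tau) + O(1/\log x).
\]
The standard identity $\gamma = B + \sum_{p}\sum_{k\geq 2}1/(kp^k)$ (obtained by comparing the logarithms of Mertens' second and third theorems over \emph{all} primes) then gives, in the case $\tau = 1$ that is actually used in Lemma \ref{lem9911.444}, the equality $B + M_1 = \gamma$. Exponentiating yields
\[
\prod_{\substack{p \leq x \\ p \in \mathcal{P}_f}} \left(1 - \frac{1}{p}\right)^{-1} = e^{\tau\gamma}(\log x)^\tau\left(1 + O(1/\log x)\right) = (e^{\gamma}\log x)^{\tau} + O(1/\log x),
\]
as claimed.

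The main obstacle is the identification of the tail constant $M_\tau$ with $\tau(\gamma - B)$. For general $\tau < 1$ this is not literally correct, since $M_\tau$ depends on the specific primes in $\mathcal{P}_f$ and not just on their density; in principle one only gets $(C_f \log x)^\tau$ for some $f$-dependent constant $C_f$. Since the subsequent application (Lemma \ref{lem9911.444}) operates in the regime $\tau = 1$, this subtlety is inessential there, but I would flag it explicitly so that the reader understands the claimed form is tight only when $\mathcal{P}_f$ has full density.
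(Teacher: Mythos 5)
Your proof follows the same route as the paper's: take logarithms, split off the $k\geq 2$ tail of the Mercator expansion, apply Lemma \ref{lem9977.111} to the $k=1$ sum, and invoke the identity $B=\gamma-\sum_{p}\sum_{k\geq 2}\left(kp^k\right)^{-1}$ before exponentiating. The caveat you flag --- that the tail constant depends on the actual primes in $\mathcal{P}_f$ rather than on $\tau$ alone, so the constant $e^{\gamma\tau}$ is not forced for proper subsets --- is real and applies equally to the paper's own (unflagged) argument; your version is the more careful of the two.
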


\begin{proof} Express the logarithm of the product as 
	\begin{equation}
		\sum _{\substack{p\leq x\\ p\in \mathcal{P}_f}} \log \left(1-\frac{1}{p}\right)^{-1}=\sum
		_{\substack{p\leq x\\ p\in \mathcal{P}_f}} \sum _{k\geq 1} \frac{1}{k p^k}=\sum _{\substack{p\leq x\\ p\in \mathcal{P}_f}} \frac{1}{p} +\sum _{\substack{p\leq x\\ p\in \mathcal{P}_f}}
		\sum _{k\geq 2} \frac{1}{k p^k} .
	\end{equation}   
Apply Lemma \ref{lem9977.111}, and the linear independence relation $B =\gamma -\sum _{p\geq 2} \sum _{k\geq 2} \left(k p^k\right) ^{-1}$, see \cite[Theorem 427]{HW2008}, to complete the verification.
\end{proof}

\section{References}

\currfilename.\\


\begin{thebibliography}{22}
\bibitem{AT1976} Apostol, Tom M. \textit{\color{blue}Introduction to analytic number theory}. Undergraduate Texts in Mathematics. Springer-Verlag, New York-Heidelberg, 1976.
	

\bibitem{BB1987} Borwein, J. M. and Borwein, P. B.  \textit{\color{blue}AGM: A Study in Analytic Number Theory and Computational Complexity}. New York: Wiley, pp. 362-386, 1987.

\bibitem{CH2019} H. Cohen. \textit{\color{blue}An Introduction to Modular Forms.} Ilker Inam; Engin Büyükaşık, Editors. Birkhäuser, pp.3-62, 2019, Workshops in the Mathematical Sciences. ffhal-01883058.


\bibitem{DLMF} F. Olver, M. McClain, et al, Editors. \textit{\color{blue}Digital Library of mathematical Functions.} http://dlmf.nist.gov/5.4.el.

\bibitem{EJ2007} J. H. Evertse. \textit{\color{blue}Notes on Diophantine Approximation}. Lecture Notes, 2007.

\bibitem{EL1985}  Ellison, William; Ellison, Fern. \textit{\color{blue}Prime numbers}. A Wiley-Interscience Publication. John Wiley and Sons, Inc., 
New York; Hermann, Paris, 1985.

\bibitem{GT2020}Ayla Gafni, Jesse Thorner, And Peng-Jie Wong. \textit{\color{blue}Applications Of The Sato-Tate Conjecture.} https://arxiv.org/pdf/2003.09026.

\bibitem{HI1989} Hafner, James. Ivi, Aleksandar. \textit{\color{blue}On Sums Of Fourier Coefficients Of Cusp Forms}. 
L'Enseignement Mathematique, Vol.35 (1989).

\bibitem{HW2008} Hardy, G. H.; Wright, E. M.  \textit{\color{blue}An introduction to the theory of numbers}. Sixth edition. Oxford University Press, 2008.
	
\bibitem{HS2021} Alexandra Hoey, Jonas Iskander, Steven Jin, Fernando Trejos Suarez. \textit{\color{blue}An unconditional explicit bound on the error term in the Sato-Tate conjecture}. http://arxiv.org/abs/2108.03520. 


\bibitem{LMFDB} LMFDB. \textit{\color{blue}L-Functions, Modular Forms Data Base}, lmfdb.org, 2019.


\bibitem{LS2019} F. Luca, M. Radziwill, I. E. Shparlinski. \textit{\color{blue}On the typical size and cancellations among the coefficients of some modular forms}. Mathematical Proceedings of the Cambridge Philosophical Society, Volume 166, Issue 1, pp. 173-189, 2019. http://arxiv.org/abs/1308.6606.




\bibitem{MS1987} V. K. Murty, R. Murty, N. Saradha. \textit{\color{blue}Odd prime values of the Ramanujan tau function}. Bull. Soc. Math. France 115 (1987), 391-395. 

\bibitem{NI1956} Niven, I. \textit{\color{blue}Irrational Numbers}, Am. Math. Assotiation, Weley and Sons Inc.,  1956.

\bibitem{RJ2005} Jeremy Rouse. \textit{\color{blue}Atkin-Serre Type Conjectures For Automorphic Representations On GL(2).} Math. Res. Lett. 14 (2007), no. 2, 189-204.



\bibitem{TJ2021} Thorner, Jesse. \textit{\color{blue}Effective forms of the Sato--Tate conjecture.} http://arXiv.org/abs/2002.10450. doi 10.1007/s40687-020-00234-3.

\bibitem{WE1961} Wirsing, E. \textit{\color{blue}Das asymptotische Verhalten von Summen uber multiplikative Funktionen}, Math. Ann. 143 (1961) 75-102.

\bibitem{WM2000} Waldschmidt, M. \textit{\color{blue}Diophantine approximation on linear algebraic groups. Transcendence properties of the exponential function in several variables.} Grundlehren der Mathematischen Wissenschaften, 326. Springer-Verlag, Berlin, 2000.

\bibitem{ZD2013} Zagier, D.. \textit{\color{blue}Modular Forms of One variable,} https://people.mpim-bonn.mpg.de/zagier/files/tex/UtrechtLectures/UtBook.pdf, 2013.
\end{thebibliography}
\end{document}